\documentclass[a4paper]{amsart}%
\usepackage{amscd}
\usepackage{amsthm}
\usepackage{graphicx}
\usepackage{amsmath}
\usepackage{amsfonts}
\usepackage{amssymb}%
\setcounter{MaxMatrixCols}{30}
\providecommand{\U}[1]{\protect\rule{.1in}{.1in}}

\newtheorem{theorem}{Theorem}

\newtheorem{lemma}[theorem]{Lemma}

\begin{document}

\title[$L^{p}-L^{q}$ Estimates for Some Convolution Operators]{$L^{p}-L^{q}$ Estimates for Some Convolution Operators with Singular
Measures on The Heisenberg Group}
\author{Pablo Rocha and Tomas Godoy}
\address{Facultad de Matem\'{a}tica, Astronom\'{\i}a y F\'{\i}sica - Ciem
Universidad Nacional de C\'{o}rdoba - Conicet
Ciudad Universitaria, 5000 C\'{o}rdoba, Argentina
}
\email{godoy@famaf.unc.edu.ar, rp@famaf.unc.edu.ar}
\thanks{\textbf{Key words and phrases}: Singular measures, group Fourier transform,
Heisenberg group, convolution operators, spherical transform.}
\thanks{\textbf{2.010 Math. Subject Classification}: 43A80, 42A38.}
\thanks{Partially supported by Agencia Cordoba Ciencia, Secyt-UNC , Conicet and
ANPCYT}
\maketitle

\begin{abstract}
We consider the Heisenberg group $\mathbb{H}^{n}=\mathbb{C}^{n}\times
\mathbb{R}$. Let $\nu $ be the Borel measure on $\mathbb{H}^{n}$ defined by $%
\nu (E)=\int_{\mathbb{C}^{n}}\chi _{E}\left( w,\varphi (w)\right) \eta (w)dw$%
, where $\varphi (w)=\sum\limits_{j=1}^{n}a_{j}\left\vert w_{j}\right\vert
^{2}$, $w=(w_{1},...,w_{n})\in \mathbb{C}^{n}$, $a_{j}\in \mathbb{R}$,
and $\eta (w)=\eta _{0}\left( \left\vert w\right\vert
^{2}\right) $ with $\eta _{0}\in C_{c}^{\infty }(\mathbb{R})$. In this paper
we characterize the set of pairs $(p,q)$ such that the convolution operator
with $\nu $ is $L^{p}(\mathbb{H}^{n})-L^{q}(\mathbb{H}^{n})$ bounded. We
also obtain $L^{p}$-\textit{improving} properties of measures supported on
the graph of the function $\varphi (w)=|w|^{2m}$.
\end{abstract}

\section{Introduction}

Let $\mathbb{H}^{n}=\mathbb{C}^{n}\times \mathbb{R}$ be the
Heisenberg group with group law $\left( z,t\right) \cdot \left( w,s\right)
=\left( z+w,t+s+\left\langle z,w\right\rangle \right) $ where $\langle
z,w\rangle =\frac{1}{2}Im(\sum\limits_{j=1}^{n}z_{j}\cdot \overline{w_{j}})$%
. For $x=(x_{1},...,x_{2n})\in \mathbb{R}^{2n}$, we write $x=(x^{\prime
},x^{\prime \prime })$ with $x^{\prime }\in \mathbb{R}^{n}$, $x^{\prime
\prime }\in \mathbb{R}^{n}$. So, $\mathbb{R}^{2n}$ can be identified with $%
\mathbb{C}^{n}$ via the map $\Psi (x^{\prime },x^{\prime \prime })=x^{\prime
}+ix^{\prime \prime }$. In this setting the form $\langle z,w\rangle $
agrees with the standard symplectic form on $\mathbb{R}^{2n}$. Thus $\mathbb{%
H}^{n}$ can be viewed as $\mathbb{R}^{2n}\times \mathbb{R}$ endowed with the
group law
\[
\left( x,t\right) \cdot \left( y,s\right) =\left( x+y,t+s+\frac{1}{2}%
W(x,y)\right)
\]%
where the symplectic form $W$ is given by $W(x,y)=\sum\limits_{j=1}^{n}%
\left( y_{n+j}x_{j}-y_{j}x_{n+j}\right) $, with $x=(x_{1},...,x_{2n})$ and $%
y=(y_{1},...,y_{2n})$, with neutral element $(0,0)$, and with inverse $%
\left( x,t\right) ^{-1}=\left( -x,-t\right) $.

Let $\varphi :\mathbb{R}^{2n}\rightarrow \mathbb{R}$ be a measurable
function and let $\nu $ be the Borel measure on $\mathbb{H}^{n}$ supported
on the graph of $\varphi $, given by
\begin{equation}
\nu (E)=\int\limits_{\mathbb{R}^{2n}}\chi _{E}\left( w,\varphi \left(
w\right) \right) \eta \left( w\right) dw,  \label{nu}
\end{equation}%
with $\eta (w)=\prod_{j=1}^{n}\eta _{j}\left( \left\vert w_{j}\right\vert
^{2}\right) $, where for $j=1, ..., n$, $\eta _{j}$ is a function in $%
C_{c}^{\infty }(\mathbb{R})$ such that $0\leq \eta _{j}\leq 1$, $\eta
_{j}(t)\equiv 1$ if $t\in \lbrack -1,1]$ and $supp(\eta
_{j})\subset (-2,2)$. Let $T_{\nu }$ be the right convolution operator by $%
\nu $, defined by
\begin{equation}
T_{\nu }f\left( x,t\right) =\left( f\ast \nu \right) \left( x,t\right)
=\int_{\mathbb{R}^{2n}}f\left( \left( x,t\right) \cdot \left( w,\varphi
\left( w\right) \right) ^{-1}\right) \eta \left( w\right) dw.  \label{tnu}
\end{equation}%
We are interested in studying the type set
\[
E_{\nu }=\left\{ \left( \frac{1}{p},\frac{1}{q}\right) \in \left[ 0,1\right]
\times \left[ 0,1\right] :\left\Vert T_{\nu }\right\Vert _{pq}<\infty
\right\}
\]%
where the $L^{p}$ - spaces are taken with respect to the Lebesgue measure on
$\mathbb{R}^{2n+1}$. We say that the measure $\nu $ defined in (\ref{nu}) is
$L^{p}$-\textit{improving} if $E_{\nu }$ does not reduce to the diagonal $%
1/p=1/q$.

This problem is well known if in (\ref{tnu}) we replace the Heisenberg group
convolution with the ordinary convolution in $\mathbb{R}^{2n+1}$. If the
graph of $\varphi $ has non-zero Gaussian curvature at each point, a theorem
of Littman (see \cite{littman}) implies that $E_{\nu }$ is the closed
triangle with vertices $(0,0)$, $(1,1)$, and $\left( \frac{2n+1}{2n+2},\frac{%
1}{2n+2}\right) $ (see \cite{oberlin}). A very interesting survey of results
concerning the type set for convolution operators with singular measures can
be found in \cite{ricci}. Returning to our setting $\mathbb{H}^{n}$, in \cite%
{secco} S. Secco obtains $L^{p}$-\textit{improving} properties of measures
supported on curves in $\mathbb{H}^{1}$, under the assumption that
\begin{eqnarray}
\left\vert
\begin{array}{cc}
\phi _{1}^{(2)} & \phi _{2}^{(2)} \\
\phi _{1}^{(3)} & \phi _{2}^{(3)}%
\end{array}%
\right\vert (s) &\neq &-\frac{(\phi _{1}^{(2)}(s))^{2}}{2},\qquad \forall
s\in I  \nonumber \\
\left\vert
\begin{array}{cc}
\phi _{1}^{(2)} & \phi _{2}^{(2)} \\
\phi _{1}^{(3)} & \phi _{2}^{(3)}%
\end{array}%
\right\vert (s) &\neq &\frac{(\phi _{1}^{(2)}(s))^{2}}{2},\qquad \forall
s\in I  \nonumber
\end{eqnarray}%
where $\Phi (s)=\left( s,\phi _{1}(s),\phi _{2}(s)\right) $ is the curve on
which the measure is supported. In \cite{ricci2} F. Ricci and E. Stein showed that the type set of the
measure given by (\ref{nu}), for the case $\varphi(w)=0$ and $n=1$, is the triangle with vertices
$(0,0),$ $(1,1),$ and $\left( \frac{3}{4},\frac{1}{4}\right)$.

In this article we consider first $\varphi
(w)=\sum\limits_{j=1}^{n}a_{j}\left\vert w_{j}\right\vert ^{2}$, with $%
w_{j}\in \mathbb{R}^{2}$ and $a_{j}\in \mathbb{R}$. The
Riesz-Thorin theorem implies that the type set $E_{\nu }$ is a convex subset
of $[0,1]\times \lbrack 0,1]$. In Lemmas 3 and 4 we obtain the following
necessary conditions on the pairs $\left( \frac{1}{p},\frac{1}{q}\right) \in
E_{\nu },$
\begin{eqnarray*}
\frac{1}{q} &\leq &\frac{1}{p}, \\
\frac{1}{q} &\geq &\frac{2n+1}{p}-2n \\
\frac{1}{q} &\geq &\frac{1}{(2n+1)p}
\end{eqnarray*}%
Thus $E_{\nu }$ is contained in the closed triangle with vertices $(0,0),$ $%
(1,1),$ and $\left( \frac{2n+1}{2n+2},\frac{1}{2n+2}\right) $. In Section 3
we prove that $E_{\nu }$ is exactly the closed triangle with these vertices.
Indeed, we obtain the following

\begin{theorem}
\textit{If }$\nu $ \textit{is the Borel measure defined by (\ref{nu}),
supported on the graph of the function }$\varphi
(w)=\sum\limits_{j=1}^{n}a_{j}\left\vert w_{j}\right\vert ^{2}$, with $%
w_{j}\in \mathbb{R}^{2}$ and $a_{j}\in \mathbb{R}$,\textit{\
then the type set }$E_{\nu }$ \textit{is the closed triangle with vertices}
\[
A=\left( 0,0\right) ,\qquad B=\left( 1,1\right) ,\qquad C=\left( \frac{2n+1}{%
2n+2},\frac{1}{2n+2}\right)
\]%
\textit{with }$n\in \mathbb{N}$.
\end{theorem}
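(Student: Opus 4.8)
\medskip
\noindent\textit{Plan of proof.}
The inclusion $E_{\nu}\subseteq$ (the closed triangle $ABC$) is already contained in Lemmas 3 and 4: the three displayed inequalities cut out precisely that triangle. For the reverse inclusion one uses that $E_{\nu}$ is convex by the Riesz--Thorin theorem, so it suffices to check that $A$, $B$, $C\in E_{\nu}$. The vertices $A=(0,0)$ and $B=(1,1)$ are immediate because $\nu$ is a finite measure: $\|T_{\nu}f\|_{\infty}\le\|\nu\|\,\|f\|_{\infty}$ and $\|T_{\nu}f\|_{1}\le\|\nu\|\,\|f\|_{1}$. The entire content of the theorem thus reduces to the single endpoint bound $T_{\nu}\colon L^{p_{0}}(\mathbb{H}^{n})\to L^{q_{0}}(\mathbb{H}^{n})$ with $p_{0}=\tfrac{2n+2}{2n+1}$, $q_{0}=2n+2$; note that $C$ lies on the antidiagonal and is fixed by the duality $(1/p,1/q)\mapsto(1-1/q,1-1/p)$, which is consistent with the fact that $T_{\nu}^{*}$ is again an operator of the same type (with each $a_{j}$ replaced by $-a_{j}$), so only this one estimate is really needed.

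To prove the estimate at $C$ the plan is to pass to the group Fourier transform. Taking the partial Fourier transform in the central variable $t$ conjugates convolution by $\nu$ into the field of $\lambda$-twisted convolutions on $\mathbb{R}^{2n}$ by the measures $e^{-i\lambda\varphi(w)}\eta(w)$; equivalently $\widehat{\nu}(\lambda)=\int e^{i\lambda\varphi(w)}\pi_{\lambda}(w)\eta(w)\,dw$. Since $\varphi$ is a sum over the planes $w_{j}\in\mathbb{R}^{2}$ and $\eta=\prod_{j}\eta_{j}(|w_{j}|^{2})$ is the corresponding product of radial cut-offs, $\widehat{\nu}(\lambda)$ splits as an $n$-fold tensor product of commuting $U(1)$-invariant operators, hence is diagonal in the Hermite/Laguerre basis, and the spherical (Laguerre) transform reduces the computation to $\mathbb{H}^{1}$. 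The key input I would extract is the decay estimate
\[
\|\widehat{\nu}(\lambda)\|_{\mathrm{op}}\le C\,(1+|\lambda|)^{-n},
\]
obtained by estimating the resulting one-variable oscillatory integrals against Laguerre functions, uniformly in the Laguerre index and, crucially, uniformly in the $a_{j}$'s. This is where the non-degeneracy of the symplectic form $W$ enters; the bound does not see the $a_{j}$'s at all, which is exactly why the statement holds for every choice of them, including the flat case $\varphi\equiv0$ of Ricci--Stein.

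From this decay estimate and the Plancherel formula on $\mathbb{H}^{n}$ one gets that $T_{\nu}$, and more generally the damped operators $T_{z}$ obtained by inserting a fractional-integration factor $(1+\lambda^{2})^{z}$ in the central variable (so $T_{0}=T_{\nu}$), are bounded on $L^{2}$ for $\operatorname{Re}z\le n/2$; on the other hand, for $\operatorname{Re}z<-\tfrac12$ the kernel of $T_{z}$ is $\eta(x)$ times a bounded Bessel-type function of $t-\varphi(x)$, so $T_{z}\colon L^{1}\to L^{\infty}$. Applying Stein's analytic interpolation theorem to $\{T_{z}\}$ between these two vertical lines, with the parametrization arranged so that $z=0$ lands at the interpolation parameter that produces the exponents $(p_{0},q_{0})$, then gives $C\in E_{\nu}$, hence (with the already-known vertices $A$, $B$ and convexity) the whole triangle.

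The main obstacle is twofold. First there is the sharp decay $\|\widehat{\nu}(\lambda)\|_{\mathrm{op}}\lesssim(1+|\lambda|)^{-n}$: obtaining the exponent $n$ (which is precisely what forces the vertex $C$ and matches the necessary condition $1/q\ge 1/((2n+1)p)$) together with uniformity in the $a_{j}$'s requires careful special-function and oscillatory-integral estimates and is the technical heart of the Fourier-analytic part. Second, and more delicate, $C$ is an \emph{extreme} point of $E_{\nu}$, so it cannot be produced by interpolating bounds already available inside $E_{\nu}$; the naive damped family above is borderline and misses $C$ by an $\varepsilon$, since the $L^{1}\to L^{\infty}$ bound degenerates (only logarithmically) precisely on the line $\operatorname{Re}z=-\tfrac12$ that one would need. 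Closing this gap at the exact endpoint demands an extra ingredient: refining the analytic family so as to absorb the logarithmic defect, replacing the $L^{1}\to L^{\infty}$ endpoint by a Hardy-space/$BMO$ substitute, or exploiting the exact $2$-homogeneity of $\varphi$ — which makes the associated untruncated convolution operator scale-invariant exactly at the pair $(p_{0},q_{0})$ — to pin $C$ down directly and transfer the bound back to the truncated measure $\nu$.
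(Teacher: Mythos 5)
Your overall architecture matches the paper's: Lemmas 3 and 4 give the inclusion of $E_{\nu}$ in the triangle; convexity plus the trivial vertices $A,B$ reduce everything to the single endpoint bound $L^{\frac{2n+2}{2n+1}}\to L^{2n+2}$ at $C$; and that bound is attacked through the group Fourier transform, Geller's diagonalization of polyradial kernels in the Hermite basis, and the resulting one-dimensional Laguerre-transform integrals, whose bound $\lesssim |\lambda|^{-1}$ per factor --- uniform in the Laguerre index and in $a_{j}$, since $a_{j}$ enters only as the point at which a convolution $\widehat{F_{\alpha_{j}}}\ast\widehat{G_{\lambda}}$ is evaluated --- is exactly the paper's key estimate. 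Up to that point the plan is sound.

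The gap is in your final paragraph. The endpoint $C$ is not an obstruction requiring Hardy-space substitutes or a separate scaling argument; it is reached directly by choosing the analytic family correctly, and your proposal stops precisely where the actual construction must be supplied. The paper damps in the central variable not with $(1+\lambda^{2})^{z}$ but with the analytically continued Riesz family $I_{z}(s)=\frac{2^{-z/2}}{\Gamma(z/2)}\left\vert s\right\vert^{z-1}$, mollified by $\widehat{\phi_{N}}$ so that all kernels are integrable and $T_{N,0}\to cT_{\nu}$. The $\Gamma(z/2)^{-1}$ normalization is the whole point: on $\mathrm{Re}(z)=1$ the kernel $\left\vert s\right\vert^{z-1}$ is literally a bounded function, so $\nu\ast J_{N,z}\in L^{\infty}$ and $T_{N,z}:L^{1}\to L^{\infty}$ with no logarithmic degeneration; on $\mathrm{Re}(z)=-n$ one has $\widehat{I_{z}}=I_{1-z}$ with $\left\vert I_{1-z}(\lambda)\right\vert\sim c_{z}\left\vert\lambda\right\vert^{n}$, which exactly cancels the $\left\vert\lambda\right\vert^{-n}$ decay of the Laguerre coefficients and yields the uniform $L^{2}$ bound. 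Since $I_{0}=c\delta$, the operator of interest sits at $z=0$, an interior point of the strip $-n\le\mathrm{Re}(z)\le 1$ at parameter $\theta=\frac{1}{n+1}$, which lands exactly at $\left(\frac{2n+2}{2n+1},\,2n+2\right)$; Stein interpolation (with admissible growth controlled via Stirling's bound on $\left\vert\Gamma\left(\frac{1-z}{2}\right)\right\vert^{-1}$) then gives $C\in E_{\nu}$ with no $\varepsilon$ loss, and Fatou's lemma passes from $T_{N,0}$ to $T_{\nu}$. The logarithmic defect you perceive at $\mathrm{Re}(z)=-\frac{1}{2}$ is an artifact of the inhomogeneous Bessel damping you chose; with the homogeneous $\Gamma$-normalized family no endpoint problem arises. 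As written, your proposal therefore leaves open the one estimate to which everything reduces, and moreover suggests, incorrectly, that it cannot be obtained by analytic interpolation alone.
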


In a similar way we also obtain $L^{p}$-\textit{improving} properties of the
measure supported on the graph of the function $\varphi (w)=|w|^{2m}$. In
fact we prove the following

\begin{theorem}
For $m,n\in \mathbb{N}_{\geq 2}$ let $\nu _{m}$ be the measure given by (\ref%
{nu}) with $\varphi (y)=\left\vert y\right\vert ^{2m},$ $y\in \mathbb{R}%
^{2n}.$ Then the type set $E_{\nu _{m}}$ contains the closed triangle with
vertices $\left( 0,0\right) ,$ $\left( 1,1\right) ,$ $\left( \frac{2(1+mn)-m%
}{2(1+mn)},\frac{m}{2(1+mn)}\right) $.
\end{theorem}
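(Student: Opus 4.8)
The plan is to reduce everything to a single endpoint estimate and then interpolate. Since $\nu_m$ is a finite measure, $T_{\nu_m}$ is bounded on $L^{1}$ and on $L^{\infty}$, so $A=(0,0)$ and $B=(1,1)$ lie in $E_{\nu_m}$; as $E_{\nu_m}$ is convex by Riesz--Thorin, it suffices to prove that the vertex $C_{m}=(1/p_{0},1/q_{0}):=\left(\frac{2(1+mn)-m}{2(1+mn)},\frac{m}{2(1+mn)}\right)$ belongs to $E_{\nu_m}$. Note that $C_{m}$ lies on the line $\frac1p+\frac1q=1$; writing $C_{m}=(1-\tfrac{\theta_{m}}{2},\tfrac{\theta_{m}}{2})$ we have $\theta_{m}=\frac{m}{1+mn}$ and $\frac{1-\theta_{m}}{\theta_{m}}=n-1+\frac1m$. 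Moreover, since $0\le\eta_{j}\le1$ and the convolution kernel of $T_{\nu_m}$ is nonnegative, $|T_{\nu_m}f|\le T_{\widetilde\nu_m}|f|$ pointwise, where $\widetilde\nu_m$ is obtained from (\ref{nu}) on replacing $\eta$ by a radial majorant $\eta_{0}(|w|^{2})\in C_{c}^{\infty}$, $\eta_{0}\ge0$; hence we may assume from the start that $\eta(w)=\eta_{0}(|w|^{2})$ is radial, which is what makes the spherical transform available.

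First I would decompose $\nu_m$ by a Littlewood--Paley partition in the central variable. Fix an even $\psi\in C_{c}^{\infty}(\mathbb R)$ supported in $\{1/2\le|\lambda|\le2\}$ with $\sum_{k\ge1}\psi(2^{-k}\lambda)=1$ for $|\lambda|\ge1$, and set $\psi_{0}=1-\sum_{k\ge1}\psi(2^{-k}\cdot)$. Using $\pi_{\lambda}(w,t)=e^{i\lambda t}\pi_{\lambda}(w,0)$ one sees that multiplying $\widehat{\nu_m}(\lambda)$ by a function of $\lambda$ alone amounts to convolving $\nu_m$ in the $t$-variable; thus the distribution $\nu_m^{(k)}$ with group Fourier transform $\psi(2^{-k}|\lambda|)\widehat{\nu_m}(\lambda)$ is the function $(x,t)\mapsto\eta(x)\,\check\psi_{k}(t-|x|^{2m})$, where $\check\psi_{k}$ is the inverse Fourier transform of $\psi(2^{-k}|\cdot|)$; in particular $\|\nu_m^{(k)}\|_{\infty}\lesssim2^{k}$ and $\|\nu_m^{(k)}\|_{1}\lesssim1$ uniformly in $k\ge1$, while the low-frequency piece $\nu_m^{(0)}$ is a bounded, integrable function, so $T_{\nu_m^{(0)}}$ is $L^{p}\to L^{q}$ bounded for \emph{every} pair in the unit square. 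Writing $T_{k}=T_{\nu_m^{(k)}}$, it remains to sum $\sum_{k\ge1}T_{k}$ from $L^{p_{0}}$ to $L^{q_{0}}$.

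For each $k\ge1$ I would establish two estimates: the trivial bound $\|T_{k}\|_{L^{1}\to L^{\infty}}=\|\nu_m^{(k)}\|_{\infty}\lesssim2^{k}$ (from $\|f\ast g\|_{\infty}\le\|f\|_{1}\|g\|_{\infty}$ on $\mathbb H^{n}$), and the crucial $L^{2}$ bound
\[
\|T_{k}\|_{L^{2}\to L^{2}}=\sup_{\lambda\neq0}\bigl\|\widehat{\nu_m^{(k)}}(\lambda)\bigr\|_{\mathrm{op}}\ \le\ \sup_{|\lambda|\sim2^{k}}\bigl\|\widehat{\nu_m}(\lambda)\bigr\|_{\mathrm{op}}\ \lesssim\ 2^{-k\left(n-1+\frac1m\right)},
\]
where the first equality is the Plancherel theorem on $\mathbb H^{n}$. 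Interpolating these by Riesz--Thorin at the angle $\theta_{m}$ gives $\|T_{k}\|_{L^{p_{0}}\to L^{q_{0}}}\lesssim2^{k(1-\theta_{m})}2^{-k\theta_{m}(n-1+\frac1m)}=2^{k[(1-\theta_{m})-\theta_{m}(n-1+\frac1m)]}$, which by the identity $\frac{1-\theta_{m}}{\theta_{m}}=n-1+\frac1m$ is only $O(1)$ in $k$; so the endpoint is \emph{borderline}. To recover summability I would run this last interpolation in Lorentz spaces (equivalently, between the restricted weak-type versions of the two estimates), obtaining the restricted weak-type bound for $T_{\nu_m}$ at $C_{m}$ and then upgrading it, as is standard at a critical exponent --- proceeding exactly as in the proof of Theorem~1; alternatively, any genuine $2^{-\epsilon k}$ gain coming from the oscillation $e^{i\lambda|w|^{2m}}$ lets one simply sum a geometric series.

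The heart of the matter is the displayed $L^{2}$ estimate. Since $\eta$ is radial, $\widehat{\nu_m}(\lambda)=\int_{\mathbb R^{2n}}e^{i\lambda|w|^{2m}}\eta(w)\,\pi_{\lambda}(w,0)\,dw$ is the spherical (Weyl) transform of a radial function, hence diagonal in the special Hermite basis, and (up to a harmless, boundedly $k$-dependent normalization) its operator norm equals a constant times
\[
|\lambda|^{-n}\ \sup_{k\ge0}\ \Bigl|\int_{0}^{\infty}e^{\,i\operatorname{sgn}(\lambda)\,2^{m}s^{m}|\lambda|^{1-m}}\,\eta_{0}(2s/|\lambda|)\,L_{k}^{\,n-1}(s)\,e^{-s/2}\,s^{\,n-1}\,ds\Bigr|.
\]
I would estimate this oscillatory integral uniformly in $k$ by splitting dyadically in $s$, rescaling each dyadic block so that the Laguerre factor is normalized, inserting the known uniform asymptotics for $L_{k}^{n-1}(s)e^{-s/2}$, and then applying van der Corput / stationary phase to the full phase --- the external phase $s^{m}/|\lambda|^{m-1}$ together with the oscillation carried by the Laguerre factor. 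The blocks near $s=0$, where $|w|^{2m}$ has a degenerate critical point whose order grows with $m$, give the delicate contribution and are where the hypotheses $m\ge2$ and $n\ge2$ are used; extracting precisely the exponent $n-1+\frac1m$ from that analysis is the main obstacle. With the $L^{2}$ bound in hand, the interpolation and summation above, together with the trivial vertices $A$ and $B$, yield that $E_{\nu_m}$ contains the closed triangle with vertices $A$, $B$, $C_{m}$.
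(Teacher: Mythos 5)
Your architecture (reduce to the off-diagonal vertex $C_m$, decompose dyadically in the central frequency $\lambda$, prove $L^1\to L^\infty$ and $L^2\to L^2$ bounds per piece, interpolate) is coherent, but it is not the paper's argument and it leaves two genuine gaps. The first is the displayed $L^2$ estimate $\sup_{|\lambda|\sim 2^k}\|\widehat{\nu_m}(\lambda)\|_{op}\lesssim 2^{-k(n-1+\frac1m)}$, which you correctly identify as the heart of the matter but do not prove: you defer it to uniform Laguerre asymptotics plus stationary phase and concede that ``extracting precisely the exponent $n-1+\frac1m$ \dots is the main obstacle.'' The paper gets exactly this decay much more cheaply. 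Writing the diagonal entry as $\frac{k!}{(k+n-1)!}\,2^{n-1}|\lambda|^{-n}\bigl(\widehat{F_{n,k}}\ast\widehat{G_\lambda}\ast\widehat{R_\lambda}\bigr)(0)$ with $F_{n,k}(\sigma)=\chi_{(0,\infty)}(\sigma)L_k^{n-1}(\sigma)e^{-\sigma/2}\sigma^{n-1}$, it computes $\widehat{F_{n,k}}$ in closed form (Lemma 6), so that for $n\ge2$ the norm $\|\widehat{F_{n,k}}\|_1=\frac{(k+n-1)!}{k!}\int(\tfrac14+\xi^2)^{-n/2}\,d\xi$ exactly cancels the normalization uniformly in $k$, and then applies van der Corput only to the external phase $R_\lambda(\sigma)=\chi_{(0,|\lambda|)}(\sigma)e^{i2^m\,sgn(\lambda)|\lambda|^{1-m}\sigma^m}$, giving $\|\widehat{R_\lambda}\|_\infty\le C_m|\lambda|^{(m-1)/m}$; combined with $|\lambda|^{-n}$ this is precisely $|\lambda|^{-(n-1+\frac1m)}$, with no Laguerre asymptotics needed. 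This is also where $m,n\ge2$ enter.

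The second gap is the summation. As you note, at $C_m$ each dyadic piece contributes $O(1)$, so the series does not converge, and neither of your remedies closes this. There is no $2^{-\epsilon k}$ gain to be had: the exponent $n-1+\frac1m$ is exactly critical for this vertex. Bourgain-type summation of the two families of bounds yields only the restricted weak-type estimate $L^{p_0,1}\to L^{q_0,\infty}$ at $C_m$, whereas the theorem asserts that the \emph{closed} triangle lies in $E_{\nu_m}$, i.e.\ the strong $(p_0,q_0)$ bound at the vertex itself; ``upgrading, as is standard at a critical exponent'' is not a step, and the proof of Theorem 1 you invoke performs no such upgrade --- it sidesteps the issue entirely. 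The paper's device is to embed $T_{\nu_m}$ in the analytic family $U_{N,z}f=f\ast\nu_m\ast J_{N,z}$, where $J_{N,z}$ carries the fractional integration kernel $I_z$ in the central variable: the factor $|\Gamma(\frac{1-z}{2})|^{-1}$ supplies admissible bounds on the lines $Re(z)=1$ and $Re(z)=-(n+\frac{1-m}{m})$, and Stein's complex interpolation theorem delivers the strong endpoint bound at $z=0$ uniformly in $N$, after which Fatou gives the bound for $T_{\nu_m}$. To salvage your route you would need either an endpoint mechanism producing strong type at $C_m$, or to weaken the conclusion to the triangle minus that vertex.
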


Throughout this work, $c$ will denote a positive constant not necessarily
the same at each occurrence.

\section{Necessary conditions}

We denote $B(r)$ the $2n+1$ dimensional ball centered at the origin with
radius $r.$

\begin{lemma}
\textit{Let }$\nu $ \textit{be the Borel measure defined by (\ref{nu}), where%
} $\varphi $ \textit{is a bounded measurable function. If} $\left( \frac{1}{p%
},\frac{1}{q}\right) \in E_{\nu }$ \textit{then }$p\leq q$.
\end{lemma}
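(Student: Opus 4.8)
The plan is to exploit translation invariance of the convolution operator together with a scaling/mass argument. The key point is that $T_\nu$ commutes with left translations on $\mathbb{H}^n$, and left translation is an isometry on every $L^r(\mathbb{R}^{2n+1})$ (since the group law has Jacobian one). So if $T_\nu\colon L^p\to L^q$ is bounded with norm $M$, then for any $g\in L^p$ and any finite collection of group elements $h_1,\dots,h_N$, applying $T_\nu$ to a sum of widely separated left-translates of $g$ forces a relation between $N^{1/p}$ and $N^{1/q}$, which can only hold for all $N$ if $1/q \le 1/p$, i.e.\ $p\le q$.

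Concretely, I would first fix a nonzero $f_0\in C_c(\mathbb{R}^{2n+1})$ whose support has diameter (in a fixed word metric or just Euclidean coordinates) at most $1$, and note that since $\varphi$ is bounded and $\eta$ has compact support, $\nu$ is a compactly supported measure; hence $T_\nu f_0 = f_0\ast\nu$ is also supported in a fixed bounded set, say $B(R_0)$ for some $R_0$. Next I would choose group elements $h_1,\dots,h_N$ such that the left-translates $L_{h_k}f_0$ have pairwise disjoint supports and, simultaneously, the translates $L_{h_k}(f_0\ast\nu) = (L_{h_k}f_0)\ast\nu$ also have pairwise disjoint supports — this is possible because one can take the $h_k$ to be, e.g., $(ke_1,0)$ with $e_1$ a horizontal basis vector and $k$ ranging over multiples of a large constant depending on $R_0$; translating all relevant functions by the same amount and keeping them far apart is easy in a group whose underlying space is $\mathbb{R}^{2n+1}$.

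Now set $F_N = \sum_{k=1}^N L_{h_k}f_0$. By disjointness of supports, $\|F_N\|_p = N^{1/p}\|f_0\|_p$. Since $T_\nu$ commutes with $L_{h_k}$, we get $T_\nu F_N = \sum_{k=1}^N L_{h_k}(T_\nu f_0)$, and by disjointness of the supports of these translates, $\|T_\nu F_N\|_q = N^{1/q}\|T_\nu f_0\|_q$. Choosing $f_0$ so that $T_\nu f_0\not\equiv 0$ (for instance $f_0\ge 0$ not identically zero, so that $f_0\ast\nu>0$ on a set of positive measure since $\nu$ is a nonzero positive measure), the boundedness inequality $\|T_\nu F_N\|_q\le M\|F_N\|_p$ becomes $N^{1/q}\|T_\nu f_0\|_q \le M\, N^{1/p}\|f_0\|_p$, i.e.\ $N^{1/q-1/p}\le c$ for all $N\in\mathbb{N}$. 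Letting $N\to\infty$ forces $1/q-1/p\le 0$, which is exactly $p\le q$.

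I do not expect any genuine obstacle here; the one point that needs a line of care is verifying that the translates $h_k$ can be chosen to keep \emph{both} the supports of $L_{h_k}f_0$ and those of $L_{h_k}(T_\nu f_0)$ mutually disjoint — but since both families of functions are supported in fixed compact sets and left translation by $(ke_1,0)$ shifts the first Euclidean coordinate by $k$ while only perturbing the others in a controlled (bounded, on compacta) way, taking $k\in\{0,L,2L,\dots,(N-1)L\}$ for $L$ large enough relative to $R_0$ does the job. The boundedness of $\varphi$ is used precisely to guarantee $\nu$ has compact support so that $T_\nu f_0$ lies in a fixed ball.
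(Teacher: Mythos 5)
Your argument is correct and is essentially the paper's: the paper simply observes that $T_{\nu}$ commutes with left translations $\tau_{(y,s)}$ and then invokes H\"ormander's classical translation argument (Theorem 1.1 of the cited Acta Math.\ paper), which is exactly the sum-of-widely-separated-translates computation you carry out. The only cosmetic difference is that you get exact rather than asymptotic additivity of the $L^{p}$ and $L^{q}$ norms by taking $f_{0}$ compactly supported and using the compact support of $\nu$, whereas H\"ormander's version works with the limit $\lim_{|h|\to\infty}\lVert f+\tau_{h}f\rVert_{p}=2^{1/p}\lVert f\rVert_{p}$ for general $f\in L^{p}$.
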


\begin{proof}
For $(y,s) \in \mathbb{H}^{n}$ we define the operator $\tau_{(y,s)}$ by $(\tau_{(y,s)}f)(x,t) = f((y,s)^{-1} \cdot (x,t))$.
Since $\tau_{(y,s)} T_{\nu} = T_{\nu} \tau_{(y,s)}$, it is easy to see that the $\mathbb{R}^{n}$ argument utilized in the proof of Theorem 1.1 in \cite{hor}
works as well on $\mathbb{H}^{n}$.
\end{proof}

\begin{lemma}
\textit{Let }$\nu $ \textit{be the Borel measure defined by (\ref{nu}), where%
} $\varphi $ \textit{is a smooth function.} \textit{Then} $E_{\nu }$ \textit{%
is contained in the closed triangle with vertices}
\[
(0,0)\,,\,(1,1)\,,\,\left( \frac{2n+1}{2n+2},\frac{1}{2n+2}\right) .
\]
\end{lemma}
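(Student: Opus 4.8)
The plan is to establish the two non-trivial linear inequalities
\[
\frac{1}{q}\ \geq\ \frac{2n+1}{p}-2n
\qquad\text{and}\qquad
\frac{1}{q}\ \geq\ \frac{1}{(2n+1)\,p},
\]
since, together with $1/q\leq 1/p$ (the preceding lemma) and the trivial inclusion $E_{\nu}\subseteq[0,1]^{2}$, these three half-planes intersect in exactly the asserted triangle: they are precisely the lines carrying its three edges (through the pairs of vertices $(0,0)$, $(1,1)$, $\left(\tfrac{2n+1}{2n+2},\tfrac{1}{2n+2}\right)$).

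For the first inequality I would test $T_{\nu}$ on $f_{\delta}=\chi_{B(c\delta)}$ with $\delta$ small, so that $\|f_{\delta}\|_{p}\sim\delta^{(2n+1)/p}$. Fix a point $w_{0}$ in the interior of $\{\eta\equiv 1\}$ and set $\Sigma=\{(w_{1},\varphi(w_{1})):|w_{1}-w_{0}|<\varepsilon_{0}\}$, a compact $2n$-dimensional piece of $\operatorname{supp}\nu$, and $N_{\delta}=B(\delta)\cdot\Sigma$. The core estimate is that, for $c$ large enough (depending only on $\varphi$, $w_{0}$, $\varepsilon_{0}$), one has $T_{\nu}f_{\delta}(x,t)\gtrsim\delta^{2n}$ for every $(x,t)\in N_{\delta}$. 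To see this, write $(x,t)=(b,\beta)\cdot(w_{1},\varphi(w_{1}))$ with $(b,\beta)\in B(\delta)$; for $|w-w_{1}|<\delta$, smoothness of $\varphi$ together with the bound $|W(w_{1},w)|=|W(w_{1},w-w_{1})|\leq|w_{1}|\,|w-w_{1}|$ shows that $(w_{1},\varphi(w_{1}))\cdot(w,\varphi(w))^{-1}$ lies in a ball of radius $O(\delta)$, hence $(x,t)\cdot(w,\varphi(w))^{-1}\in B(c\delta)$ after absorbing the symplectic cross term in the product with $(b,\beta)$. Therefore the integral defining $T_{\nu}f_{\delta}$ in (\ref{tnu}) is at least $\int_{|w-w_{1}|<\delta}\eta(w)\,dw\gtrsim\delta^{2n}$. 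Since $N_{\delta}\supseteq\{(w_{1},\varphi(w_{1})+\beta):|w_{1}-w_{0}|<\varepsilon_{0},\ |\beta|<\delta\}$, we get $|N_{\delta}|\gtrsim\delta$, so $\|T_{\nu}f_{\delta}\|_{q}\gtrsim\delta^{2n}\delta^{1/q}$. Comparing $\delta^{2n+1/q}\lesssim\delta^{(2n+1)/p}$ and letting $\delta\to 0^{+}$ gives $2n+1/q\geq(2n+1)/p$.

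For the second inequality I would use duality. Since $\mathbb{H}^{n}$ is unimodular, a Fubini computation shows $T_{\nu}^{\ast}g=g\ast\check{\nu}$, where $\check{\nu}$ is the push-forward of $\nu$ under $y\mapsto y^{-1}$; as $(w,\varphi(w))^{-1}=(-w,-\varphi(w))$ and $\eta(-w)=\eta(w)$, the measure $\check{\nu}$ is again of the form (\ref{nu}) with $\varphi$ replaced by the smooth function $\psi(v)=-\varphi(-v)$. Boundedness of $T_{\nu}\colon L^{p}\to L^{q}$ is equivalent to boundedness of $T_{\check{\nu}}\colon L^{q'}\to L^{p'}$, and applying the first inequality to $\check{\nu}$ (with exponents $q'$, $p'$ in place of $p$, $q$) yields $1/p'\geq(2n+1)/q'-2n$, which rearranges to $1/q\geq 1/((2n+1)p)$.

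I expect the only real work to be the geometric lower bound $T_{\nu}f_{\delta}\gtrsim\delta^{2n}$ on a set of measure $\gtrsim\delta$: one must check that right convolution by $\nu$ turns a $\delta$-ball into a function that is $\sim\delta^{2n}$ throughout a genuine $\delta$-tube around a $2n$-dimensional patch of $\operatorname{supp}\nu$, which amounts to keeping careful track of how left translation and the quadratic term $\tfrac12 W$ in the group law distort small balls. The duality step and the reduction of the three inequalities to the triangle are routine.
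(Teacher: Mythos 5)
Your proposal is correct and follows essentially the same route as the paper: a characteristic function of a small ball as test function, a lower bound $T_{\nu}f_{\delta}\gtrsim\delta^{2n}$ on a $\delta$-neighborhood of the graph of $\varphi$ (controlling the symplectic cross term $\tfrac12 W$ exactly as the paper does), and then duality with the reflected measure $\check{\nu}$, which is again of the form (\ref{nu}), to obtain $1/q\geq 1/((2n+1)p)$. The only cosmetic differences are that you describe the neighborhood as $B(\delta)\cdot\Sigma$ rather than $\{|t-\varphi(x)|\leq\delta/4\}$ and that you intersect the three half-planes directly instead of invoking Riesz--Thorin convexity.
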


\begin{proof}
We will prove that if $\left( \frac{1}{p},\frac{1}{q}\right) \in $ $E_{\nu }$
then $\frac{1}{q}\geq \frac{2n+1}{p}-2n$ and $\frac{1}{q}\geq \frac{1}{%
(2n+1)p}$. Then the lemma will follow by the Riesz-Thorin theorem. Let $%
f_{\delta }=\chi _{Q_{\delta }}$, where $Q_{\delta }=B(2\delta ).$ Let $%
D=\left\{ x\in \mathbb{R}^{2n}:\left\Vert x\right\Vert \leq 1\right\} $ and $%
A_{\delta }$ the set defined by
\[
A_{\delta }=\left\{ (x,t)\in \mathbb{R}^{2n}\times \mathbb{R}:x\in D\
;\left\vert t-\varphi (x)\right\vert \leq \frac{\delta }{4}\right\} .
\]%
For each $(x,t)\in A_{\delta }$ fixed, we define $F_{\delta ,x}$ by
\[
F_{\delta ,x}=\left\{ y\in D:\left\Vert x-y\right\Vert _{\mathbb{R}%
^{2n}}\leq \frac{\delta }{4n(1+\left\Vert \nabla \varphi \mid _{supp
(\eta )}\right\Vert _{\infty })}\right\} .
\]%
Now, for each $(x,t)\in A_{\delta }$ fixed, we have
\begin{equation}
(x,t)\cdot (y,\varphi (y))^{-1}\in Q_{\delta },\qquad \forall y\in F_{\delta
,x},  \label{qdelta}
\end{equation}%
indeed
\[
\left\Vert (x,t)\cdot (y,\varphi (y))^{-1}\right\Vert _{\mathbb{R}%
^{2n+1}}\leq \left\Vert x-y\right\Vert _{\mathbb{R}^{n}\times \mathbb{R}%
^{n}}
\]%
\[
+\left\vert t-\varphi (x)\right\vert +\left\vert \varphi (x)-\varphi
(y)\right\vert +\frac{1}{2}\left\vert W(x,y)\right\vert ,
\]%
since
\[
\frac{1}{2}\left\vert W(x,y)\right\vert \leq n\left\Vert x\right\Vert _{%
\mathbb{R}^{2n}}\left\Vert x-y\right\Vert _{\mathbb{R}^{2n}},
\]%
(\ref{qdelta}) follows. Then for $(x,t)\in A_{\delta }$ we obtain
\[
T_{\nu }f_{\delta }(x,t)\geq \int\limits_{F_{\delta ,x}}\eta (y)dy\geq
c\delta ^{2n},
\]%
where $c$ not depends on $\delta $, $x$ and $t$. If $(\frac{1}{p},\frac{1}{q}%
)\in E_{\nu }$ implies
\[
c\delta ^{\frac{1}{q}+2n}=c\delta ^{2n}\left\vert A_{\delta }\right\vert ^{%
\frac{1}{q}}\leq \left( \int\limits_{A_{\delta }}\left\vert T_{\nu
}f_{\delta }(x,t)\right\vert ^{q}\right) ^{\frac{1}{q}}\leq \left\Vert
T_{\nu }f_{\delta }\right\Vert _{q}\leq c_{p,q}\left\Vert f_{\delta
}\right\Vert _{p}=c\delta ^{\frac{2n+1}{p}},
\]%
thus $\delta ^{2n+\frac{1}{q}}\leq C\delta ^{\frac{2n+1}{p}}$ for all $%
0<\delta <1$ small enough. This implies that
\[
\frac{1}{q}\geq \frac{2n+1}{p}-2n.
\]%
Now, the adjoint operator of $T_{\nu}$ is given by
$$T^{*}_{\nu}g(x,t)=\int_{\mathbb{R}^{2n}} \, g \left( (x,t) \cdot (y, \varphi(y))\right) \eta(y) \, dy$$
and let $E^{*}_{\nu}$ be the type set corresponding. Since $T_{\nu}=(T^{*}_{\nu})^{*}$, by duality it follows that
$\left(\frac{1}{p}, \frac{1}{p'} \right) \in E_{\nu}$ if and only if $\left(\frac{1}{p}, \frac{1}{p'} \right) \in E^{*}_{\nu}$,
thus if $\left( \frac{1}{p}, \frac{1}{q} \right) \in E^{*}_{\nu}$ then $\frac{1}{q}\geq \frac{2n+1}{p}-2n$.
Finally, by duality it is also necessary that
\[
\frac{1}{q}\geq \frac{1}{(2n+1)p}.
\]%
Therefore $E_{\nu }$ is contained in the region determined by these two
conditions and by the condition $p\leq q$, i.e.: the closed triangle with
vertices $(0,0)$, $(1,1)$, $(\frac{2n+1}{2n+2},\frac{1}{2n+2})$.
\end{proof}

\qquad

\textbf{Remark}
\textit{Lemma 4 holds if we replace the smoothness condition with a Lipschitz
condition.}

\section{The Main Results}

We consider for each $N\in \mathbb{N}$ fixed, an auxiliary operator $T_{N}$
which will be embedded in a analytic family of operators $\left\{
T_{N,z}\right\} $ on the strip $-n\leq Re(z)\leq 1$ such that
\begin{equation}
\left\{
\begin{array}{c}
\left\Vert T_{N,z}\left( f\right) \right\Vert _{L^{\infty }\left( \mathbb{H}%
^{n}\right) }\leq c_{z}\left\Vert f\right\Vert _{L^{1}\left( \mathbb{H}%
^{n}\right) }\qquad Re(z)=1 \\
\left\Vert T_{N,z}\left( f\right) \right\Vert _{L^{2}\left( \mathbb{H}%
^{n}\right) }\leq c_{z}\left\Vert f\right\Vert _{L^{2}\left( \mathbb{H}%
^{n}\right) }\qquad Re(z)=-n%
\end{array}%
\right\}   \label{prop}
\end{equation}%
where $c_{z}$ will depend admissibly on the variable $z$ and it will not
depend on $N.$ We denote $T_{N}=T_{N,0}$. By Stein's theorem of complex
interpolation, it will follow that the operator $T_{N}$ will be bounded from
$L^{\frac{2n+2}{2n+1}}(\mathbb{H}^{n})$ in $L^{2n+2}(\mathbb{H}^{n})$
uniformly on $N$, if we see that $T_{N}f\left( x,t\right) \rightarrow T_{\nu
}f\left( x,t\right) $ as $N\rightarrow \infty ,$ a.e $(x,t)\in \mathbb{R}%
^{2n+1}$. Theorem 1 will then follow from Fatou's lemma and the lemmas 3 and
4. To prove the second inequality in (\ref{prop}) we will see that such
family will admit the following expression
\[
T_{N,z}(f)(x,t)=\left( f\ast K_{N,z}\right) (x,t),
\]%
where $K_{N,z}\in L^{1}(\mathbb{H}^{n})$, moreover it is a poliradial
function (i.e. the values of $K_{N,z}$ depend on $\left\vert
w_{1}\right\vert ,$...$,\left\vert w_{n}\right\vert $ and $t$). Now our
operator $T_{N,z}$ can be realized as a multiplication of operators via the
group Fourier transform, i.e.
\[
\widehat{T_{N,z}(f)}(\lambda )=\widehat{f}(\lambda )\widehat{K_{N,z}}%
(\lambda )
\]%
where, for each $\lambda \neq 0$, $\widehat{K_{N,z}}(\lambda )$ is an
operator on the Hilbert space $L^{2}(\mathbb{R}^{n})$ given by
\[
\widehat{K_{N,z}}(\lambda )g(\xi )=\int\limits_{\mathbb{H}%
^{n}}K_{N,z}(\varsigma ,t)\pi _{\lambda }(\varsigma ,t)g(\xi )d\varsigma dt.
\]%
It then follows from Plancherel's theorem for the group Fourier transform
that
\[
\left\Vert T_{N,z}f\right\Vert _{L^{2}(\mathbb{H}^{n})}\leq A_{z}\left\Vert
f\right\Vert _{L^{2}(\mathbb{H}^{n})}
\]%
if and only if
\begin{equation}
\left\Vert \widehat{K_{N,z}}(\lambda )\right\Vert _{op}\leq A_{z}
\label{L21}
\end{equation}%
uniformly over\textit{\ }$N$\textit{\ }and\textit{\ }$\lambda \neq 0.$ Since
$K_{N,z}$ is a poliradial integrable function, then by a well known result
of Geller (see Lemma 1.3, p. 213 in \cite{geller}), the operators $\widehat{%
K_{N,z}}(\lambda ):L^{2}(\mathbb{H}^{n})\rightarrow L^{2}(\mathbb{H}^{n})$
are, for each $\lambda \neq 0$, diagonal with respect to a Hermite basis for
$L^{2}(\mathbb{R}^{n})$. This is
\[
\widehat{K_{N,z}}(\lambda )=C_{n}\left( \delta _{\gamma ,\alpha }\mu
_{N,z}(\alpha ,\lambda )\right) _{\gamma ,\alpha \in \mathbb{N}_{0}^{n}}
\]%
where $C_{n}=(2\pi )^{n}$, $\alpha =(\alpha _{1},...,\alpha _{n})$, $\delta _{\gamma ,\alpha }=1$ if $\gamma = \alpha$ and $\delta _{\gamma ,\alpha }=0$ if $\gamma \neq \alpha$, and the
diagonal entries $\mu _{N,z}(\alpha _{1},...,\alpha _{n},\lambda )$ can be
expressed explicitly in terms of the Laguerre transform. We have in fact
\[
\mu _{N,z}(\alpha _{1},...,\alpha _{n},\lambda )=\int\limits_{0}^{\infty
}\,...\,\int\limits_{0}^{\infty }\,K_{N,z}^{\lambda
}(r_{1},...,r_{n})\prod_{j=1}^{n}\left( r_{j}L_{\alpha _{j}}^{0}(\frac{1}{2}%
\left\vert \lambda \right\vert r_{j}^{2})e^{-\frac{1}{4}\left\vert \lambda
\right\vert r_{j}^{2}}\right) \,dr_{1}...dr_{n}
\]%
where $L_{k}^{0}(s)$ are the Laguerre polynomials, i.e. $L_{k}^{0}(s)=%
\sum_{i=0}^{k}\left( \frac{k!}{(k-i)!i!}\right) \frac{(-s)^{i}}{i!}$ and $%
K_{N,z}^{\lambda }(\varsigma )=\int\limits_{\mathbb{R}}K_{N,z}(\varsigma
,t)e^{i\lambda t}dt.$ Now (\ref{L21}) is equivalent to
\[
\left\Vert T_{N,z}f\right\Vert _{L^{2}(\mathbb{H}^{n})}\leq A_{z}\left\Vert
f\right\Vert _{L^{2}(\mathbb{H}^{n})}
\]%
if and only if
\begin{equation}
\left\vert \mu _{N,z}(\alpha _{1},...,\alpha _{n},\lambda )\right\vert \leq
A_{z}  \label{L22}
\end{equation}%
uniformly over\textit{\ }$N$, $\alpha _{j}$\textit{\ }and\textit{\ }$\lambda
\neq 0.$ If $Re(z)=-n$ we prove that $\left\vert \mu _{N,z}(\alpha
_{1},...,\alpha _{n},\lambda )\right\vert \leq A_{z}$, with $A_{z}$
independent of $N$, $\lambda \neq 0$ and $\alpha _{j}$, and then we obtain
the boundedness on $L^{2}(\mathbb{H}^{n})$ that is stated in (\ref{prop}).

We consider the family $\{ I_{z} \}_{z \in \mathbb{C}}$ of distributions on $\mathbb{R}$ that arises by analytic continuation of the family
$\{ I_{z} \}$ of functions, initially given when $Re(z)>0$ and $s\in \mathbb{R} \setminus\{ 0 \}$ by
\begin{equation}
I_{z}(s)=\frac{2^{-\frac{z}{2}}}{\Gamma \left( \frac{z}{2}\right) }%
\left\vert s\right\vert ^{z-1}.  \label{iz}
\end{equation}%
In particular, we have $%
\widehat{I_{z}}=I_{1-z}$, also $I_{0}=c\delta $ where $\widehat{\cdot }$
denotes the Fourier transform on $\mathbb{R}$ and $\delta $ is the Dirac
distribution at the origin on $\mathbb{R}$.

Let $H\in S(\mathbb{R)}$ such that $supp(\widehat{H})\subseteq
\left( -1,1\right) $ and $\int \widehat{H}(t)dt=1$. Now we put $\phi
_{N}(t)=H(\frac{t}{N})$ thus $\widehat{\phi _{N}}(\xi )=N\widehat{H}(N\xi )$
and $\widehat{\phi _{N}}\rightarrow \delta $ in the sense of the
distribution, as $N\rightarrow \infty $.

For $z\mathbb{\in C}$ and $N\in \mathbb{N}$, we also define $J_{N,z}$ as the
distribution on $\mathbb{H}^{n}$ given by the tensor products
\begin{equation}
J_{N,z}=\delta \otimes ...\otimes \delta \otimes \left( I_{z}\ast _{\mathbb{R%
}}\widehat{\phi _{N}}\right)  \label{jz}
\end{equation}%
where $\ast _{\mathbb{R}}$ denotes the usual convolution on $\mathbb{R}$ and
$I_{z}$ is the fractional integration kernel given by (\ref{iz}). Finally,
for $z\in \mathbb{C}$ and $N\in \mathbb{N}$ fixed, we defined the operator $%
T_{N,z}$ by
\begin{equation}
T_{N,z}f(x,t)=\left( f\ast \nu \ast J_{N,z}\right) (x,t)  \label{tz}
\end{equation}%
We observe that $T_{N,0}f(x,t)\rightarrow cT_{\nu }f(x,t)$ as $N\rightarrow
\infty $ a.e $(x,t)\in \mathbb{R}^{2n+1},$ since $J_{N,0}=\delta \otimes
...\otimes \delta \otimes c\widehat{\phi _{N}}\rightarrow \delta \otimes
...\otimes \delta \otimes c\delta $ in the sense of the distribution, as $%
N\rightarrow \infty $.

\qquad

Before proving Theorem 1 we need the following lemmas,

\begin{lemma}
\textit{If }$Re(z)\leq -1$ \textit{then }$\nu \ast J_{N,z}\in L^{p}(\mathbb{H%
}^{n}),$ $\forall p\geq 1.$
\end{lemma}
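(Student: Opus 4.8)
The claim is that $\nu\ast J_{N,z}\in L^p(\mathbb H^n)$ for all $p\ge1$ when $\operatorname{Re}(z)\le-1$. The strategy is to write the convolution explicitly and exploit the tensor structure of $J_{N,z}=\delta\otimes\cdots\otimes\delta\otimes(I_z\ast_{\mathbb R}\widehat{\phi_N})$. Since $\nu$ is supported on the graph $\{(w,\varphi(w))\}$ with the smooth compactly supported density $\eta(w)$, one computes that for a test function $g$,
\[
\langle \nu\ast J_{N,z},g\rangle=\int_{\mathbb R^{2n}}\eta(w)\,\bigl(\,(I_z\ast_{\mathbb R}\widehat{\phi_N})\ast_{\mathbb R} g(w,\cdot)\,\bigr)(\varphi(w))\,dw,
\]
so that $\nu\ast J_{N,z}$ is the function
\[
(\nu\ast J_{N,z})(x,t)=\eta(x)\,\bigl(I_z\ast_{\mathbb R}\widehat{\phi_N}\bigr)\!\bigl(t-\varphi(x)\bigr),
\]
(the $\delta$-factors in the first $2n$ variables just pin $w=x$; one must be slightly careful about the group law, but because $J_{N,z}$ is supported on $\{0\}\times\mathbb R$ the twisting term $\tfrac12 W(x,y)$ vanishes). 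Thus the problem reduces to a one-dimensional question about the scalar kernel $I_z\ast_{\mathbb R}\widehat{\phi_N}$.

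**Key steps.** First I would record that for $\operatorname{Re}(z)\le-1$ the distribution $I_z$, which for $\operatorname{Re}(z)>0$ is the locally integrable function $\tfrac{2^{-z/2}}{\Gamma(z/2)}|s|^{z-1}$, continues analytically to a distribution that is smooth away from the origin and, near the origin, is a finite combination of derivatives of $\delta$ plus a locally integrable remainder; crucially it has \emph{polynomial} decay of order $|s|^{\operatorname{Re}(z)-1}$, hence $O(|s|^{-2})$, at infinity. Second, I would note $\widehat{\phi_N}=H(\cdot/N)^{\widehat{\ }}=N\widehat H(N\cdot)$ is a Schwartz function, so convolving $I_z$ against it yields a genuine $C^\infty$ function $g_{N,z}:=I_z\ast_{\mathbb R}\widehat{\phi_N}$ on all of $\mathbb R$ (the singularity at $0$ is smoothed out), and away from a neighbourhood of the origin $g_{N,z}(s)$ inherits the $O(|s|^{\operatorname{Re}(z)-1})=O(|s|^{-2})$ decay from $I_z$ (the Schwartz tail of $\widehat{\phi_N}$ does not spoil this). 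Third, I would combine these: $(\nu\ast J_{N,z})(x,t)=\eta(x)g_{N,z}(t-\varphi(x))$ where $\eta$ has compact support in $x$ and $\varphi$ is continuous hence bounded on that support; therefore
\[
\|\nu\ast J_{N,z}\|_{L^p(\mathbb H^n)}^p=\int_{\mathbb R^{2n}}|\eta(x)|^p\Bigl(\int_{\mathbb R}|g_{N,z}(t-\varphi(x))|^p\,dt\Bigr)dx
=\Bigl(\int_{\mathbb R^{2n}}|\eta(x)|^p dx\Bigr)\|g_{N,z}\|_{L^p(\mathbb R)}^p,
\]
and $g_{N,z}\in L^p(\mathbb R)$ for every $p\ge1$ because it is bounded and continuous with $O(|s|^{-2})$ decay (bounded near $0$ by smoothness, integrable-power tails at $\infty$). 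This finishes the proof.

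**Main obstacle.** The only genuinely delicate point is justifying the explicit formula $(\nu\ast J_{N,z})(x,t)=\eta(x)\,g_{N,z}(t-\varphi(x))$ and, underlying it, the regularity/decay of the analytically continued kernel $I_z$ when $\operatorname{Re}(z)\le-1$: one must check that the derivative-of-$\delta$ terms appearing in $I_z$ near $0$ are harmless after convolution with the Schwartz function $\widehat{\phi_N}$ (they are, producing derivatives of a Schwartz function), and that the decay rate $|s|^{\operatorname{Re}(z)-1}$ survives the convolution uniformly enough that $g_{N,z}\in L^1\cap L^\infty$, hence in every $L^p$. Everything else — the tensor/group-law bookkeeping showing the first $2n$ coordinates reduce to evaluation at $w=x$ with no twist, and the Fubini computation of the $L^p$ norm — is routine. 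Note the conclusion need not be uniform in $N$, which is consistent with the statement.
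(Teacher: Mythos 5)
Your proof is correct and follows essentially the same route as the paper: derive the explicit formula $(\nu\ast J_{N,z})(x,t)=\eta(x)\,(I_z\ast_{\mathbb R}\widehat{\phi_N})(t-\varphi(x))$, then show the one-dimensional kernel is bounded on compacts and $O(|s|^{-2})$ at infinity (using $\operatorname{Re}(z)-1\le-2$), and conclude by Fubini and the compact support of $\eta$. The ``delicate point'' you flag about the tail of $\widehat{\phi_N}$ is disposed of in the paper by noting that $\widehat{\phi_N}$ is supported in $(-1/N,1/N)$ (since $\operatorname{supp}\widehat H\subseteq(-1,1)$), so for $|s|\ge (N+1)/N$ the pairing $I_z\bigl(\tau_s(\widehat{\phi_N}^{\vee})\bigr)$ only involves $I_z$ away from the origin, where it is literally $c|t|^{z-1}$ (or $0$ when $z\in-2\mathbb N$), and the decay follows at once.
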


\begin{proof}
For $Re(z)\leq -1$ and $N \in \mathbb{N}$ fixed, a simple calculation gives
\[
\left( \nu \ast J_{N,z}\right) (x,\sigma )=\eta (x)\left( I_{z}\ast _{%
\mathbb{R}}\widehat{\phi _{N}}\right) (\sigma -\varphi (x)).
\]%
We see
that is enough to prove that $\left( I_{z}\ast \widehat{\phi _{N}}\right) (s) \in L^{p}(\mathbb{R})$, if $Re(z)\leq
-1 $.
For them we observe that if $g \in \mathcal{S}(\mathbb{R})$ with $supp(g) \cap [-\epsilon,\epsilon] = \emptyset$ for some $\epsilon > 0$, then for $Re(z)\leq -1$
$$I_{z}(g) = \frac{2^{-\frac{z}{2}}}{\Gamma \left( \frac{z}{2}\right) } \int_{|t|\geq \epsilon} \left\vert t \right\vert ^{z-1} g(t) dt, \,\,\,\,\,\,
if \,\,\, z \notin -2\mathbb{N}$$ and $$I_{z}(g) = 0, \,\,\,\,\,\, if \,\,\, z \in -2\mathbb{N}.$$
From this observation and the fact that $$supp \left( \tau_{s} \left( \widehat{ \phi _{N}} ^{\vee} \right) \right) \subset \left[s - \frac{1}{N}, s + \frac{1}{N} \right] \subset [-\infty,-1] \cup [1,+\infty] \,\,\,\,\, for \,\,\, |s| \geq \frac{N+1}{N}$$ (where $\phi ^{\vee }(x)=\phi (-x)$ and $(\tau_{s}\phi)(x) = \phi(x-s)$), we obtain $$\left|\left( I_{z}\ast \widehat{\phi _{N}}\right) (s) \right| = \left| I_{z} \left( \tau_{s} \left( \widehat{ \phi _{N}} ^{\vee} \right) \right) \right|  \leq c \left|s - \frac{sign(s)}{N} \right|^{-2}, \,\,\,\,\, if \,\,\, |s| \geq \frac{N+1}{N}.$$ Finally, since $\left|\left( I_{z}\ast \widehat{\phi _{N}}\right) (s) \right| \leq c$ for all
$s \in [-2,2]$, the lemma follows.
\end{proof}

\begin{lemma}
\textit{For }$n\in \mathbb{N}$ \textit{and }$k\in \mathbb{N}_{0}$ \textit{we
put }
\[
F_{n,k}(\sigma ):=\chi _{(0,\infty )}(\sigma )L_{k}^{n-1}\left( \sigma
\right) e^{-\frac{\sigma }{2}}\sigma ^{n-1}
\]%
\textit{then}
\[
\widehat{F_{n,k}}(\xi )=\frac{(k+n-1)!}{k!}\frac{\left( -\frac{1}{2}+i\xi
\right) ^{k}}{\left( \frac{1}{2}+i\xi \right) ^{k+n}}.
\]
\end{lemma}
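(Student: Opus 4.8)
The plan is to compute $\widehat{F_{n,k}}$ directly, as a Laplace transform of the explicit Laguerre polynomial. With the Fourier transform normalised so that $\widehat{f}(\xi)=\int_{\mathbb{R}}f(\sigma)\,e^{-i\xi\sigma}\,d\sigma$ (the sign convention consistent with the statement), and noting that $F_{n,k}\in L^{1}(\mathbb{R})$, being a polynomial times $e^{-\sigma/2}$ supported on $(0,\infty)$, one has
\[
\widehat{F_{n,k}}(\xi)=\int_{0}^{\infty}L_{k}^{n-1}(\sigma)\,\sigma^{n-1}e^{-s\sigma}\,d\sigma,\qquad s=\tfrac12+i\xi,
\]
an absolutely convergent integral since $\operatorname{Re}(s)=\tfrac12>0$.

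I would then substitute the explicit expansion $L_{k}^{n-1}(\sigma)=\sum_{i=0}^{k}\binom{k+n-1}{k-i}\frac{(-\sigma)^{i}}{i!}$ (the analogue for parameter $n-1$ of the formula for $L_{k}^{0}$ recorded above) and integrate term by term — a finite sum, each term handled by $\int_{0}^{\infty}\sigma^{n-1+i}e^{-s\sigma}\,d\sigma=(n-1+i)!\,s^{-(n+i)}$, so that no passage to the limit is involved. This gives
\[
\widehat{F_{n,k}}(\xi)=\frac{(n-1)!}{s^{n}}\sum_{i=0}^{k}(-1)^{i}\binom{k+n-1}{k-i}\binom{n+i-1}{i}\,s^{-i}.
\]
On the other hand, the binomial theorem gives $\frac{(k+n-1)!}{k!}\frac{(s-1)^{k}}{s^{k+n}}=\frac{(k+n-1)!}{k!\,s^{n}}\sum_{i=0}^{k}(-1)^{i}\binom{k}{i}s^{-i}$. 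Comparing these two finite sums coefficient by coefficient in $s^{-1}$, the claimed formula reduces to the elementary identity
\[
(n-1)!\binom{k+n-1}{k-i}\binom{n+i-1}{i}=\frac{(k+n-1)!}{k!}\binom{k}{i}\qquad(0\le i\le k),
\]
both sides of which equal $\frac{(k+n-1)!}{i!\,(k-i)!}$ after cancelling factorials. Since $s-1=-\tfrac12+i\xi$, substituting back produces
\[
\widehat{F_{n,k}}(\xi)=\frac{(k+n-1)!}{k!}\,\frac{(-\tfrac12+i\xi)^{k}}{(\tfrac12+i\xi)^{k+n}},
\]
as asserted.

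There is no real obstacle here: the argument is a finite computation together with one textbook binomial identity, the only point worth a remark being the absolute convergence of $\int_{0}^{\infty}\sigma^{n-1+i}e^{-s\sigma}\,d\sigma$ for $\operatorname{Re}(s)>0$. As an alternative one may simply invoke the classical Laplace-transform formula $\int_{0}^{\infty}e^{-s\sigma}\sigma^{\alpha}L_{k}^{\alpha}(\sigma)\,d\sigma=\frac{\Gamma(\alpha+k+1)}{k!}\frac{(s-1)^{k}}{s^{\alpha+k+1}}$ (valid for $\operatorname{Re}(s)>0$), specialise to $\alpha=n-1$ and $s=\tfrac12+i\xi$, and read off the statement.
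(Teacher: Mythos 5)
Your proof is correct, but it takes a genuinely different route from the paper's. The paper starts from the Rodrigues-type representation $L_{k}^{n-1}(\sigma)\sigma^{n-1}=\frac{e^{\sigma}}{k!}\left(\frac{d}{d\sigma}\right)^{k}\left(e^{-\sigma}\sigma^{k+n-1}\right)$, integrates by parts $k$ times to pull out the factor $\left(-\frac{1}{2}+i\xi\right)^{k}$, and then evaluates the single remaining integral $\int_{0}^{\infty}\sigma^{k+n-1}e^{-(\frac12+i\xi)\sigma}\,d\sigma$ by a contour shift (Cauchy's theorem plus the decay of $e^{-z}$ in the right half-plane). You instead expand $L_{k}^{n-1}$ via its explicit coefficient formula, integrate the finite sum term by term against $e^{-s\sigma}$, and reduce the claim to the binomial identity $(n-1)!\binom{k+n-1}{k-i}\binom{n+i-1}{i}=\frac{(k+n-1)!}{k!}\binom{k}{i}$, which indeed both sides equal $\frac{(k+n-1)!}{i!\,(k-i)!}$ --- I checked this and your coefficient comparison is right. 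The two arguments are of comparable length; the paper's avoids any combinatorics and produces the factor $(s-1)^{k}$ structurally rather than by resummation, while yours avoids integration by parts (and the implicit vanishing of boundary terms) at the cost of the explicit Laguerre expansion and the identity. Note that both approaches ultimately rest on the same analytic fact, namely $\int_{0}^{\infty}\sigma^{N}e^{-s\sigma}\,d\sigma=N!\,s^{-N-1}$ for $\operatorname{Re}(s)>0$ with complex $s$, which you cite and the paper justifies by the contour argument; so you have not actually dispensed with that step, only localized it to a standard formula. Your closing alternative --- quoting the classical Laplace transform of $\sigma^{\alpha}L_{k}^{\alpha}(\sigma)$ --- is also legitimate and is essentially what the lemma amounts to.
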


\begin{proof}
On $\mathbb{R}$ we define the Fourier transform by $\widehat{g}(\xi )=\int_{%
\mathbb{R}}\,g(\sigma )e^{-i\sigma \xi }\,d\sigma $ thus
\[
\widehat{F_{n,k}}(\xi )=\int\limits_{0}^{\infty }L_{k}^{n-1}\left( \sigma
\right) \sigma ^{n-1}e^{-\sigma \left( \frac{1}{2}+i\xi \right) }d\sigma
\]%
and since $L_{k}^{n-1}\left( \sigma \right) \sigma ^{n-1}=\frac{e^{\sigma }}{%
k!}\left( \frac{d}{d\sigma }\right) ^{k}\left( e^{-\sigma }\sigma
^{k+n-1}\right) $ for each $n\in \mathbb{N}$ and each $k\in \mathbb{N}_{0}$,
we obtain
\begin{eqnarray*}
\widehat{F_{n,k}}(\xi ) &=&\frac{1}{k!}\int\limits_{0}^{\infty }\left( \frac{%
d}{d\sigma }\right) ^{k}\left( e^{-\sigma }\sigma ^{k+n-1}\right) e^{-\sigma
\left( -\frac{1}{2}+i\xi \right) }d\sigma  \\
&=&\frac{\left( -\frac{1}{2}+i\xi \right) ^{k}}{k!}\int\limits_{0}^{\infty
}\sigma ^{k+n-1}e^{-\sigma \left( \frac{1}{2}+i\xi \right) }d\sigma  \\
&=&\frac{\left( -\frac{1}{2}+i\xi \right) ^{k}}{k!}\int\limits_{0}^{\infty }%
\frac{s^{k+n-1}}{(\frac{1}{2}+i\xi )^{k+n-1}}e^{-s}\frac{ds}{(\frac{1}{2}%
+i\xi )} \\
&=&\frac{(k+n-1)!}{k!}\frac{\left( -\frac{1}{2}+i\xi \right) ^{k}}{\left(
\frac{1}{2}+i\xi \right) ^{k+n}}
\end{eqnarray*}%
the third equality follows from the rapid decay of the function $e^{-z}$ on
the region $\left\{ z:Re(z)>0\right\} .$ Then we apply the Cauchy's theorem.
\end{proof}

\qquad

\textbf{Proof of Theorem 1. }For $Re(z)=1$ we have
\[
\left\Vert T_{N,z}f\right\Vert _{\infty }=\left\Vert \left( f\ast \nu \ast
J_{N,z}\right) \right\Vert _{\infty }\leq \left\Vert f\right\Vert
_{1}\left\Vert \nu \ast J_{N,z}\right\Vert _{\infty }
\]%
Since
\[
\left( \nu \ast J_{N,z}\right) (x,\sigma )=\eta (x)\left( I_{z}\ast _{%
\mathbb{R}}\widehat{\phi _{N}}\right) (\sigma -\varphi \left( x\right) )
\]%
it follows that $\left\Vert \nu \ast J_{N,z}\right\Vert _{\infty }\leq
c\left\vert \Gamma \left( \frac{z}{2}\right) \right\vert ^{-1}$. Then, for $%
Re(z)=1$, we obtain
\[
\left\Vert T_{N,z}\right\Vert _{1,\infty }\leq c\left\vert \Gamma \left(
\frac{z}{2}\right) \right\vert ^{-1}.
\]

From Lemma 5, in particular, we have that $\nu \ast J_{N,z}\in L^{1}(\mathbb{%
H}^{n})\cap L^{2}(\mathbb{H}^{n}).$ In addition $\nu \ast J_{N,z}$ is a
poliradial function. Thus the operator $\left( \nu \ast J_{N,z}\right)
\widehat{\text{ \ \ }}(\lambda )$ is diagonal with respect to a Hermite base
for $L^{2}(\mathbb{R}^{n})$, and its diagonal entries $\mu _{N,z}(\alpha
,\lambda )$, with $\alpha =(\alpha _{1},...,\alpha _{n})\in \mathbb{N}%
_{0}^{n},$ are given by
\[
\mu _{N,z}(\alpha ,\lambda )=\int\limits_{0}^{\infty
}...\int\limits_{0}^{\infty }\,\left( \nu \ast J_{N,z}\right)
(r_{1},...,r_{n},\widehat{-\lambda })\prod_{j=1}^{n}\left(
r_{j}L_{k_{j}}^{0}(\frac{1}{2}\left\vert \lambda \right\vert r_{j}^{2})e^{-%
\frac{1}{4}\left\vert \lambda \right\vert r_{j}^{2}}\right) \,dr_{1}...dr_{n}
\]%
\[
=\int\limits_{0}^{\infty }\,...\,\int\limits_{0}^{\infty }\,\left( I_{z}\ast
_{\mathbb{R}}\widehat{\phi _{N}}\right) \widehat{\text{ \ \ }}(-\lambda
)\prod_{j=1}^{n}\left( \eta _{j}(r_{j}^{2})e^{i\lambda
a_{j}r_{j}^{2}}r_{j}L_{k_{j}}^{0}(\frac{1}{2}\left\vert \lambda \right\vert
r_{j}^{2})e^{-\frac{1}{4}\left\vert \lambda \right\vert r_{j}^{2}}\right)
\,dr_{1}...dr_{n}
\]%
\[
=I_{1-z}(-\lambda )\phi _{N}(\lambda )\prod_{j=1}^{n}\int\limits_{0}^{\infty
}\,\left( \eta _{j}(r_{j}^{2})e^{i\lambda a_{j}r_{j}^{2}}r_{j}L_{\alpha
_{j}}^{0}(\frac{1}{2}\left\vert \lambda \right\vert r_{j}^{2})e^{-\frac{1}{4}%
\left\vert \lambda \right\vert r_{j}^{2}}\right) \,dr_{j}.
\]%
Thus, it is enough to study the integral $\int\limits_{0}^{\infty }\eta
_{1}(r^{2})L_{\alpha _{1}}^{0}\left( \frac{\left\vert \lambda \right\vert
r^{2}}{2}\right) e^{-\frac{\left\vert \lambda \right\vert r^{2}}{4}%
}e^{i\lambda a_{1}r^{2}}r\,dr$, where $a_{1}\in \mathbb{R}$
and $\eta _{1}\in C_{c}^{\infty }(\mathbb{R})$. We make the change of
variable $\sigma =\frac{\left\vert \lambda \right\vert r^{2}}{2}$ in such
integral and we obtain
\[
\int\limits_{0}^{\infty }\eta _{1}(r^{2})L_{\alpha _{1}}^{0}\left( \frac{%
\left\vert \lambda \right\vert r^{2}}{2}\right) e^{-\frac{\left\vert \lambda
\right\vert r^{2}}{4}}e^{i\lambda a_{1}r^{2}}r\,dr
\]%
\[
=\left\vert \lambda \right\vert ^{-1}\int\limits_{0}^{\infty }\eta
_{1}\left( \frac{2\sigma }{\left\vert \lambda \right\vert }\right) L_{\alpha
_{1}}^{0}\left( \sigma \right) e^{-\frac{\sigma }{2}}e^{i2sgn(\lambda
)a_{1}\sigma }\,d\sigma
\]%
\[
=\left\vert \lambda \right\vert ^{-1}\left( F_{\alpha _{1}}G_{\lambda
}\right) \widehat{\left. {}\right. }(-2sgn(\lambda )a_{1})=\left\vert
\lambda \right\vert ^{-1}(\widehat{F_{\alpha _{1}}}\ast \widehat{G_{\lambda }%
})(-2sgn(\lambda )a_{1})
\]%
where
\begin{equation}
F_{\alpha _{1}}(\sigma ):=\chi _{(0,\infty )}(\sigma )L_{\alpha
_{1}}^{0}\left( \sigma \right) e^{-\frac{\sigma }{2}}  \nonumber
\end{equation}%
and
\begin{equation}
G_{\lambda }(\sigma ):=\eta _{1}\left( \frac{2\sigma }{\left\vert \lambda
\right\vert }\right)   \nonumber
\end{equation}%
Now
\[
\left\vert (\widehat{F_{\alpha _{1}}}\ast \widehat{G_{\lambda }}%
)(-2sgn(\lambda )a_{1})\right\vert \leq \left\Vert \widehat{F_{\alpha _{1}}}%
\ast \widehat{G_{\lambda }}\right\Vert _{\infty }\leq \left\Vert \widehat{%
F_{\alpha _{1}}}\right\Vert _{\infty }\left\Vert \widehat{G_{\lambda }}%
\right\Vert _{1}=\left\Vert \widehat{F_{\alpha _{1}}}\right\Vert _{\infty
}\left\Vert \widehat{\eta _{1}}\right\Vert _{1}.
\]%
So it is enough to estimate $\left\Vert \widehat{F_{\alpha _{1}}}\right\Vert
_{\infty }$. Now, from lemma 6, with $n=1$ and $k=\alpha _{1}$, we obtain
\begin{equation}
\left\vert \widehat{F_{\alpha _{1}}}(\xi )\right\vert =\frac{1}{\left\vert
\frac{1}{2}+i\xi \right\vert }  \nonumber
\end{equation}%
Finally, for $Re(z)=-n$, we obtain
\begin{eqnarray*}
\left\vert \mu _{N,z}(\alpha _{1},...,\alpha _{n},\lambda )\right\vert
&\leq &2^{n}\left\vert I_{1-z}(-\lambda )\phi _{N}(\lambda )\right\vert
\left\vert \lambda \right\vert ^{-n}\prod_{j=1}^{n}\Vert \widehat{\eta _{j}}%
\Vert _{1} \\
&\leq &2^{n}\left\vert \Gamma \left( \frac{1-z}{2}\right) \right\vert
^{-1}\left\vert H(\frac{\lambda }{N})\right\vert \prod_{j=1}^{n}\Vert
\widehat{\eta _{j}}\Vert _{1} \\
&\leq &2^{n}\left\vert \Gamma \left( \frac{1-z}{2}\right) \right\vert
^{-1}\left\Vert H\right\Vert _{\infty }\prod_{j=1}^{n}\Vert \widehat{\eta
_{j}}\Vert _{1}
\end{eqnarray*}%
by (\ref{L22}) it follows, for $Re(z)=-n$, that
\[
\left\Vert T_{N,z}f\right\Vert _{L^{2}(\mathbb{H}^{n})}\leq c\frac{(2\pi
)^{n}2^{n}}{\left\vert \Gamma \left( \frac{1-z}{2}\right) \right\vert }%
\left\Vert f\right\Vert _{L^{2}(\mathbb{H}^{n})}
\]%
It is easy to see, with the aid of the Stirling formula (see \cite{stein4},
p. 326), that the family $\left\{ T_{N,z}\right\} $ satisfies, on the strip $%
-n\leq Re(z)\leq 1$, the hypothesis of the complex interpolation theorem
(see \cite{stein2} p. 205) and so $T_{N,0}$ is bounded from $L^{\frac{2n+2}{%
2n+1}}(\mathbb{H}^{n})$ into $L^{2n+2}(\mathbb{H}^{n})$ uniformly on $N$,
then doing $N$ tend to infinity, we obtain that the operator $T_{\nu }$ is
bounded from $L^{\frac{2n+2}{2n+1}}(\mathbb{H}^{n})$ into $L^{2n+2}(\mathbb{H%
}^{n})$ with $n\in \mathbb{N}$.$\blacksquare $

\qquad

\textbf{Proof of Theorem 2.} We consider for each $N\in \mathbb{N}$ fixed,
the analytic family of operators $\left\{ U_{N,z}\right\} $ on the strip $%
-\left( n+\frac{1-m}{m}\right) \leq Re(z)\leq 1$, defined by $U_{N,z}f=f\ast
\nu _{m}\ast J_{N,z}$, where $J_{N,z}$ is given by (\ref{jz}) and $%
U_{N,0}f\rightarrow U_{\nu _{m}}f=f\ast \nu _{m}$ as $N\rightarrow \infty $.
Proceeding as in proof of Theorem 1 it follows, for $Re(z)=1$, that $%
\left\Vert U_{N,z}\right\Vert _{1,\infty }\leq c\left\vert \Gamma \left(
\frac{z}{2}\right) \right\vert ^{-1}$. Also it is clear that, for $%
Re(z)=-\left( n+\frac{1-m}{m}\right) $, the kernel $\nu _{m}\ast J_{N,z}\in
L^{1}(\mathbb{H}^{n})\cap L^{2}(\mathbb{H}^{n})$ and it is also a radial
function. Now, our operator $\left( \nu _{m}\ast J_{N,z}\right) \widehat{%
\left. {}\right. }(\lambda )$ is diagonal, with diagonal entries $\upsilon
_{N,z}(k,\lambda )$ given by
\begin{eqnarray*}
\upsilon _{N,z}(k,\lambda ) &=&\frac{k!}{(k+n-1)!}\int\limits_{0}^{\infty
}\left( \nu _{m}\ast J_{N,z}\right) (s,\widehat{-\lambda })L_{k}^{n-1}\left(
\frac{\left\vert \lambda \right\vert s^{2}}{2}\right) e^{-\frac{\left\vert
\lambda \right\vert s^{2}}{4}}s^{2n-1}ds
\end{eqnarray*}%
$$=\frac{k!}{(k+n-1)!}I_{1-z}(-\lambda )\phi _{N}(\lambda
)\int\limits_{0}^{\infty }\eta _{0}(s^{2})L_{k}^{n-1}\left( \frac{\left\vert
\lambda \right\vert s^{2}}{2}\right) e^{-\frac{\left\vert \lambda
\right\vert s^{2}}{4}}e^{i\lambda s^{2m}}s^{2n-1}ds
$$
Now we study the integral
\[
\int\limits_{0}^{\infty }\eta _{0}(s^{2})L_{k}^{n-1}\left( \frac{\left\vert
\lambda \right\vert s^{2}}{2}\right) e^{-\frac{\left\vert \lambda
\right\vert s^{2}}{4}}e^{i\lambda s^{2m}}s^{2n-1}ds.
\]%
We make the change of variable $\sigma =\frac{\left\vert \lambda \right\vert
s^{2}}{2}$ to obtain
\[
\int\limits_{0}^{\infty }\eta _{0}(s^{2})L_{k}^{n-1}\left( \frac{\left\vert
\lambda \right\vert s^{2}}{2}\right) e^{-\frac{\left\vert \lambda
\right\vert s^{2}}{4}}e^{i\lambda s^{2m}}s^{2n-1}ds
\]%
\[
=2^{n-1}\left\vert \lambda \right\vert ^{-n}\int\limits_{0}^{\infty }\eta
_{0}\left( \frac{2\sigma }{\left\vert \lambda \right\vert }\right)
L_{k}^{n-1}\left( \sigma \right) e^{-\frac{\sigma }{2}}e^{i2^{m}sgn(\lambda
)\left\vert \lambda \right\vert ^{1-m}\sigma ^{m}}\sigma ^{n-1}d\sigma
\]%
\[
=2^{n-1}\left\vert \lambda \right\vert ^{-n}\left( F_{n,k}G_{\lambda
}R_{\lambda }\right) \widehat{\left. {}\right. }(0)=2^{n-1}\left\vert
\lambda \right\vert ^{-n}(\widehat{F_{n,k}}\ast \widehat{G_{\lambda
}R_{\lambda }})(0)
\]%
\[
=2^{n-1}\left\vert \lambda \right\vert ^{-n}\left( \widehat{F_{n,k}}\ast
\left( \widehat{G_{\lambda }}\ast \widehat{R_{\lambda }}\right) \right) (0)
\]%
where $F_{n,k}$ is the function defined in the lemma 6, $G_{\lambda }(\sigma
)=\eta _{0}\left( 2\sigma /|\lambda |\right) $ and $R_{\lambda }(\sigma
)=\chi _{(0,\left\vert \lambda \right\vert )}(\sigma )e^{i2^{m}sgn(\lambda
)\left\vert \lambda \right\vert ^{1-m}\sigma ^{m}}$. If $n\geq 2$, from
lemma 6 we get
\begin{eqnarray*}
\left\Vert \widehat{F_{n,k}}\ast \left( \widehat{G_{\lambda }}\ast \widehat{%
R_{\lambda }}\right) \right\Vert _{\infty } &\leq &\left\Vert \widehat{%
F_{n,k}}\right\Vert _{1}\left\Vert \widehat{G_{\lambda }}\right\Vert
_{1}\left\Vert \widehat{R_{\lambda }}\right\Vert _{\infty } \\
&=&\frac{(k+n-1)!}{k!}\left( \int\limits_{\mathbb{R}}\frac{d\xi }{\left(
\frac{1}{4}+\xi ^{2}\right) ^{\frac{n}{2}}}\right) \left\Vert \widehat{\eta
_{0}}\right\Vert _{1}\left\Vert \widehat{R_{\lambda }}\right\Vert _{\infty }
\end{eqnarray*}%
Now, we estimate $\left\Vert \widehat{R_{\lambda }}\right\Vert _{\infty }$.
Taking account of Proposition 2 (p.332 in \cite{stein3}), we note that
\[
\left\vert \widehat{R_{\lambda }}(\xi )\right\vert =\left\vert
\int\limits_{0}^{\left\vert \lambda \right\vert }e^{i(2^{m}sgn(\lambda
)\left\vert \lambda \right\vert ^{1-m}\sigma ^{m}-\xi \sigma )}d\sigma
\right\vert \leq \frac{C_{m}}{\left\vert \lambda \right\vert ^{\frac{1-m}{m}}%
}
\]%
where the constant $C_{m}$ does not depend on $\lambda $. Then for $%
Re(z)=-(n+\frac{1-m}{m})$, we have
\begin{eqnarray*}
\left\vert \upsilon _{N,z}(k,\lambda )\right\vert  &\leq &\frac{k!}{(k+n-1)!}%
\left\vert I_{1-z}(-\lambda )\phi _{N}(\lambda )\right\vert
2^{n-1}\left\vert \lambda \right\vert ^{-n}\left\Vert \widehat{F_{n,k}}\ast
\left( \widehat{G_{\lambda }}\ast \widehat{R_{\lambda }}\right) \right\Vert
_{\infty } \\
&\leq &\left\vert I_{1-z}(-\lambda )\right\vert \left\vert \phi _{N}(\lambda
)\right\vert 2^{n-1}\left\vert \lambda \right\vert ^{-n}\left( \int\limits_{%
\mathbb{R}}\frac{d\xi }{\left( \frac{1}{4}+\xi ^{2}\right) ^{\frac{n}{2}}}%
\right) \left\Vert \widehat{\eta _{0}}\right\Vert _{1}\frac{C_{m}}{%
\left\vert \lambda \right\vert ^{\frac{1-m}{m}}} \\
&\leq &C_{m}2^{n-1}\left\vert \Gamma \left( \frac{1-z}{2}\right) \right\vert
^{-1}\left\Vert H\right\Vert _{\infty }\left( \int\limits_{\mathbb{R}}\frac{%
d\xi }{\left( \frac{1}{4}+\xi ^{2}\right) ^{\frac{n}{2}}}\right) \left\Vert
\widehat{\eta _{0}}\right\Vert _{1}
\end{eqnarray*}%
Finally, by (\ref{L22}) it follows that, for $Re(z)=-\left( n+\frac{1-m}{m}%
\right) $
\[
\left\Vert U_{N,z}f\right\Vert _{L^{2}(\mathbb{H}^{n})}\leq \frac{C_{n,m}}{%
\left\vert \Gamma \left( \frac{1-z}{2}\right) \right\vert }\left\Vert
f\right\Vert _{L^{2}(\mathbb{H}^{n})}
\]%
is clear that the family $\left\{ U_{N,z}\right\} $ satisfies, on the strip $%
-\left( n+\frac{1-m}{m}\right) \leq Re(z)\leq 1$, the hypothesis of the
complex interpolation theorem. Thus $U_{N,0}$ is bounded from $L^{\frac{%
2(1+nm)}{2(1+mn)-m}}(\mathbb{H}^{n})$ into $L^{\frac{2(1+nm)}{m}}(\mathbb{H}%
^{n})$ uniformly on $N$, then doing $N$ tend to infinity we obtain that the
operator $U_{\nu _{m}}$ is bounded from $L^{\frac{2(1+nm)}{2(1+mn)-m}}(%
\mathbb{H}^{n})$ into $L^{\frac{2(1+nm)}{m}}(\mathbb{H}^{n})$, for $m,n\in
\mathbb{N}_{\geq 2}$.$\blacksquare $

\qquad

\textbf{Acknowledgment.} We express our thanks to the referee for his or her useful suggestions.

\qquad

\end{document}